\documentclass[runningheads]{llncs}
\usepackage[utf8]{inputenc}
\usepackage[english]{babel}
\usepackage{amsmath}
\usepackage{amsfonts}
\usepackage{amssymb}
\usepackage{graphicx}
\usepackage{color}

\newtheorem{prop}{Proposition}
\newtheorem{cor}{Corollary}
\newtheorem{thm}{Theorem}
\newtheorem{rem}{Remark}
\newtheorem{lem}{Lemma}

\title{Computing Nonlinear Eigenfunctions via Gradient Flow Extinction}
\author{Leon Bungert \and Martin Burger \and Daniel Tenbrinck}
\begin{document}
\authorrunning{L. Bungert et al.}
%
\institute{Department Mathematik, Universit\"{a}t Erlangen-N\"{u}rnberg, Cauerstrasse 11, 91058 Erlangen, Germany.\\
\email{\{leon.bungert,martin.burger,daniel.tenbrinck\}@fau.de}}
\maketitle              
\begin{abstract}
In this work we investigate the computation of nonlinear eigenfunctions via the extinction profiles of gradient flows. We analyze a scheme that recursively subtracts such eigenfunctions from given data and show that this procedure yields a decomposition of the data into eigenfunctions in some cases as the 1-dimensional total variation, for instance. We discuss results of numerical experiments in which we use extinction profiles and the gradient flow for the task of spectral graph clustering as used, e.g., in machine learning applications. 

\keywords{Nonlinear eigenfunctions  \and Spectral decompositions \and Gradient flows \and Extinction profiles \and Graph clustering.}
\end{abstract}

\section{Introduction}
Linear eigenvalue problems are of utter importance and a classical tool in signal and image processing. A frequently used tool here is the Fourier transform which basically decomposes a given signal into eigenfunctions of the Laplacian operator and makes frequency-based filtering possible. In addition, such problems also find their applications in machine learning and the treatment of large data sets~\cite{shi2000normalized,ng2002spectral}. However, for some applications -- as for example certain graph clustering tasks -- linear theory does not suffice to achieve satisfactory results. Therefore, nonlinear eigenproblems, which involve a nonlinear operator, have gained in popularity over the last years since they can be successfully applied in far more complex and interesting application scenarios. However, solving such nonlinear eigenproblems is a challenging task and the techniques heavily depend on the structure of the involved operator. The setting we adopt is the following: we consider a Hilbert space $\mathcal H$ and study the eigenvalue problem related to the subdifferential $\partial J$ of an absolutely one-homogeneous convex functional $J:\mathcal H\to\mathbb R\cup\{+\infty\}$. A prototypical example for such a functional is the total variation. So called \emph{nonlinear eigenfunctions} are characterized by the inclusion
\begin{align}\label{eigenfunction}
\lambda p\in\partial J(p),
\end{align}
where usually the normalization $\|p\|=1$ is demanded to have a interpretable eigenvalue $\lambda$. Note that the operator $\partial J$ is nonlinear and multivalued, in particular eigenfunctions do not form linear subspaces. Important properties and characterizations of the nonlinear eigenfunctions are collected in \cite{bungert2019nonlinear}. Of particular interest in applications is the decomposition of some data $f\in\mathcal{H}$ into a linear combination of eigenfunctions which, for instance, allows for scale-based filtering in image processing \cite{burger2015spectral,gilboa2014total,gilboa2018nonlinear} -- analogously to linear Fourier methods. In other applications, as spectral graph clustering, one is rather interested in finding a specific eigenfunction that is in some way related to the data or captures topological properties of the domain. An important tool for this nonlinear spectral analysis is the so called \emph{gradient flow} of the functional $J$ 
\begin{align}\label{gradflow}\tag{GF}
\begin{cases}
u'(t)=-p(t),\quad p(t)\in\partial J(u(t)),\\
u(0)=f,
\end{cases}
\end{align}
whose connection to  which has been analysed in finite dimensions in \cite{burger2016spectral} and in infinite dimensions in \cite{bungert2019nonlinear}. In particular, the authors proved that the gradient flow is able to achieve the above-mentioned decomposition task in some situations. Furthermore, it always generates one specific eigenfunction, called the \emph{extinction profile} or \emph{asymptotic profile} of $f$ (cf.~\cite{andreu2002some} for the special case of total variation flow). This profile is given by the subsequential limit of $p(t)$ as $t$ tends to the extinction time of the flow. A different flow which also generates an eigenfunction was introduced and analyzed in \cite{aujol2017theoretical,nossek2018flows}. A third way for obtaining eigenfunctions, being less rigorous and reliable, consists in computing the gradient flow \eqref{gradflow} of $f$ and checking for subgradients $p(t)$ to be eigenfunctions.

The rest of this work is organized as follows: After recapping some notation and important results regarding gradient flows and associated eigenfunctions in Section~\ref{sec:gf_eigenfunctions}, we analyze an iterative scheme in Section~\ref{sec:scheme} which is based on extinction profiles and constitutes an alternative to the already existent decomposition into nonlinear eigenfunctions through subgradients of the gradient flow. Finally, in Section~\ref{sec:applications} we present some applications of extinction profiles, mainly to spectral clustering.

\section{Gradient Flows and Eigenfunctions}
\label{sec:gf_eigenfunctions}
Without loss of generality, we will assume that the data $f$ is orthogonal to the null-space of the functional $J$ which is denoted by $\mathcal{N}(J)$. For the example of total variation, this corresponds to calculating with data functions of zero mean. Furthermore, we will only be confronted with eigenvectors of eigenvalue $1$, which follows naturally from the gradient flow structure. Normalizing them to have unit norm, shows that a suitable eigenvalue for an element $p$ which meets $p\in\partial J(p)$ is given by $\|p\|$. A complete picture of the theory of gradient flows and nonlinear eigenproblems is given in \cite{bungert2019nonlinear}, from where the following statements are taken.

An important property of the gradient flow \eqref{gradflow} is that it decomposes the data $f$ into subgradients of the functional $J$, i.e., it holds
\begin{align}\label{reconstruction}
f=\int_0^\infty p(s)\,ds,
\end{align}
where the subgradients $p(s)$ enjoy the regularity of being elements in $\mathcal{H}$ and, furthermore, have minimal norm in the subdifferentials $\partial J(u(s))$, i.e. $\|p(s)\|\leq\|p\|$ for all $p\in\partial J(u(s))$. This naturally qualifies them for being eigenfunctions as it was shown in \cite{bungert2019nonlinear}. Furthermore, the solution $u$ of \eqref{gradflow} extincts to zero in finite time under generic conditions on the functional $J$. More precisely, it has to satisfy a Poincar\'{e}-type inequality, namely that there is $C>0$ such that
\begin{align}\label{ineq:poincare}
\|u\|\leq CJ(u)
\end{align}
holds for all $u$ which are orthogonal to the null-space of $J$ (cf.~\cite[Rem.~6.3]{bungert2019nonlinear}). Let us in the following assume that the solution $u$ of \eqref{gradflow} extincts at time $0<T<\infty$, in other words $u(t)=0$ for all $t\geq T$. In that case, there is an increasing sequence of times $(t_n)$ converging to $T$ such that
\begin{align}\label{extinction_profile}
p^*:=\lim_{n\to\infty}\frac{1}{T-t_n}\int_{t_n}^Tp(s)\,ds
\end{align}
is a non-trivial eigenfunction of $\partial J$, i.e., $p^*\neq 0$ and $p^*\in\partial J(p^*)$. The element $p^*$ is referered to as an {extinction profile} of $f$. 

In the following, the term \emph{spectral case} refers to the scenario that the subgradients $p(t)$ in \eqref{gradflow} are eigenfunctions themselves, i.e., $p(t)\in\partial J(p(t))$ for all $t>0$. Using this together with the fact that $\|p(t)\|$ is decreasing in $t$ implies that \eqref{reconstruction} becomes a decomposition of the datum into eigenfunctions with decreasing eigenvalues. Several scenarios and geometric conditions for this to happen were investigated in \cite{bungert2019nonlinear}. For instance, if the functional $J$ is the total variation in one space dimension, a divergence and rotation sparsity term, or special finite dimensional $\ell^1$-sparsity terms, one has this spectral case. Also for general $J$, a special structure of the data (cf.~\cite{bungert2018solution,bungert2019nonlinear,schmidt2018inverse}) can yield the spectral case. Let us conclude the nomenclature by introducing the quantity
\begin{align}\label{dual_norm}
\|f\|_*=\sup\left\lbrace\langle f,p\rangle\,:\,J(p)=1,\;p\in\mathcal N(J)^\perp\right\rbrace,\quad f\in\mathcal H,
\end{align}
and noting that \eqref{ineq:poincare} implies 
\begin{align}\label{ineq:imbedding}
\|f\|_*\leq C\|f\|,\quad\forall f\in\mathcal H.
\end{align}

\section{An Iterative Scheme to Compute Nonlinear Spectral Decompositions}
\label{sec:scheme}
If one is not in the above-explained spectral case, the decomposition of an arbitrary data into nonlinear eigenfunctions is a hard task. A very intuitive approach into this direction is to compute an eigenfunction, subtract it from the data, and start again. As already mentioned there are several approaches to get hold of a nonlinear eigenfunction, one of which consists in the computation of extinction profiles. We will use these to define and analyze a recursive scheme for the decomposition of data into nonlinear eigenfunctions. We consider
\begin{align}\label{scheme}\tag{S}
\begin{cases}
f_0&:=f,\\
f_{n+1}&:=f_{n}-c_np^*_n,\quad n\geq0,
\end{cases}
\end{align}
where $p_n^*$ denotes the extinction profile of $f_n$ and $c_n:=\langle f_n,p^*_n\rangle/\|p^*_n\|^2$. Note that despite being explicit, the scheme still requires the non-trivial computation of the extinction profiles $p_n^*$ of $f_n$. A numerical approach for this subprocedure is given in Section~\ref{sec:numerics}. The scheme can be rewritten as
\begin{align}\label{scheme2}\tag{S'}
f_{n+1}=f-\sum_{i=0}^nc_ip^*_i,\quad n\geq0.
\end{align}
Hence, if there is $N\in\mathbb N$ such that $f_{N+1}=0$, it holds $f=\sum_{i=0}^Nc_ip_i^*,$ which means that $f$ can be written as linear combination of finitely many eigenfunctions of $\partial J$. More generally, if there is some $g\in\mathcal H$ such that $f_n\to g$ as $n\to\infty$, one has $f=g+\sum_{i=0}^\infty c_ip_i^*,$ which corresponds to the decomposition of $f$ into a linear combination of countably many eigenfunctions and a rest $g$. 
 
Let us start by collecting some essential properties of the iterative scheme~\eqref{scheme}.
\begin{prop}
One has the following statements:
\begin{enumerate}
\item The scheme \eqref{scheme} terminates (i.e. $f_{n+1}=f_n$) if and only if $p^*_n$ is orthogonal to $f_n$. In the spectral case, this happens if and only if $f_n=0$.
\item $\|f_{n}\|$ is strictly decreasing at a maximal rate until termination.
\end{enumerate}
In more detail, it holds
\begin{align}\label{eq:norms}
\|f_{n+1}\|^2=\|f_n\|^2-\frac{\langle f_n,p_n^*\rangle^2}{\|p_n^*\|^2},\quad n\geq0.
\end{align}
\end{prop}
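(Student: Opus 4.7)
The plan is to establish the norm identity \eqref{eq:norms} first, since both (1) and (2) can then be read off from it. Expanding,
\begin{align*}
\|f_{n+1}\|^2 = \|f_n - c_n p^*_n\|^2 = \|f_n\|^2 - 2c_n\langle f_n,p^*_n\rangle + c_n^2\|p^*_n\|^2,
\end{align*}
and substituting $c_n = \langle f_n,p^*_n\rangle/\|p^*_n\|^2$ collapses the last two terms into $-\langle f_n,p^*_n\rangle^2/\|p^*_n\|^2$, which is \eqref{eq:norms}. Note that this choice of $c_n$ is exactly the minimiser of $c\mapsto\|f_n - c\,p^*_n\|^2$, so the update $f_n\mapsto f_{n+1}$ is the orthogonal projection of $f_n$ away from $\mathrm{span}(p^*_n)$.

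Statement (2) now follows immediately: $\|p^*_n\|>0$ since the extinction profile is non-trivial, hence \eqref{eq:norms} gives $\|f_{n+1}\|<\|f_n\|$ whenever $\langle f_n,p^*_n\rangle\neq 0$, and the optimality of $c_n$ noted above is precisely the meaning of ``maximal rate''. For the equivalence in (1), $p^*_n\neq 0$ implies $f_{n+1}=f_n$ iff $c_n=0$ iff $\langle f_n,p^*_n\rangle=0$.

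It remains to verify the spectral-case refinement. The ``if'' direction is trivial. For ``only if'', assume we are in the spectral case and $\langle f_n,p^*_n\rangle=0$; the task is to conclude $f_n=0$. By the reconstruction formula \eqref{reconstruction},
\begin{align*}
0 = \langle f_n,p^*_n\rangle = \int_0^{T_n}\langle p(s),p^*_n\rangle\,ds,
\end{align*}
where $T_n$ is the extinction time of the flow of $f_n$. The one-homogeneity identity $\langle p(s),u(s)\rangle=J(u(s))$ combined with the subgradient inequality at $u(s)$ applied to $p^*_n$ yields the pointwise bound $\langle p(s),p^*_n\rangle\leq J(p^*_n)=\|p^*_n\|^2$, and pairing with $p^*_n$ in \eqref{extinction_profile} shows this bound is saturated in time-average as $s\to T_n^-$. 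The spectral assumption — each $p(s)$ itself an eigenfunction — is what should allow one to upgrade this asymptotic saturation to a pointwise equality $\langle p(s),p^*_n\rangle=\|p^*_n\|^2$ for all $s\in(0,T_n)$, equivalently $p(s)\in\partial J(p^*_n)$ throughout the flow. Plugging back into the integral gives $\langle f_n,p^*_n\rangle=T_n\|p^*_n\|^2$, and since $\|p^*_n\|>0$ the hypothesis $\langle f_n,p^*_n\rangle=0$ forces $T_n=0$, i.e.\ $f_n=0$.

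The main obstacle is exactly this upgrade from an averaged to a pointwise identity, as it is the one place where the spectral hypothesis has to do real work, likely via the structural results of \cite{bungert2019nonlinear}; everything else reduces to the elementary algebra in the first two paragraphs. A conceivable alternative route is to bypass the pointwise identity by a direct positivity argument showing $\langle p(s),p^*_n\rangle\geq 0$ for all $s$, which together with the non-zero asymptotic average near $T_n$ would already contradict vanishing of the full integral unless $T_n=0$.
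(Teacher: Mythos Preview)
Your derivation of \eqref{eq:norms} and the deductions of the first equivalence in (1) and of (2) are correct and essentially identical to the paper's argument (the paper phrases the optimality of $c_n$ via the quadratic in $c$ having roots at $0$ and $2\langle f_n,p^*_n\rangle/\|p^*_n\|^2$, but this is the same computation).

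For the spectral-case refinement in (1), however, your route diverges from the paper's and carries the gap you yourself flag. The paper does not attempt the integral argument at all: it simply invokes from \cite{bungert2019nonlinear} that in the spectral case the coefficient $c_n=\langle f_n,p^*_n\rangle/\|p^*_n\|^2$ \emph{equals} the extinction time $T_n$ of the flow started at $f_n$, and then observes that by continuity of $t\mapsto u(t)$ one has $T_n=0$ iff $f_n=0$. In other words, the identity $\langle f_n,p^*_n\rangle=T_n\|p^*_n\|^2$ that you try to assemble from the reconstruction formula and a pointwise upgrade of $\langle p(s),p^*_n\rangle$ is precisely the imported result; the paper cites it rather than re-deriving it. Your proposed alternative route via nonnegativity of $\langle p(s),p^*_n\rangle$ is also not obviously available without further structural input, since distinct nonlinear eigenfunctions need not pair nonnegatively in general. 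The cleanest fix is to do what the paper does: appeal to the identity $c_n=T_n$ from \cite{bungert2019nonlinear} and finish with continuity.
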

\begin{proof}
Ad 1.: Obviously the scheme terminates if $c_n=0$ which is the case if and only if $\langle f_n,p_n^*\rangle=0$. In the spectral case, $c_n$ equals the extinction time of the gradient flow with initial data $f_n$ (cf.~\cite{bungert2019nonlinear}). Due to continuity of the solution of the gradient flow, this is zero if and only if $f_n=0$. 

Ad 2.: One has $\|f_n-cp_n^*\|^2=\|f_n\|^2-\left[2c\langle f_n,p_n^*\rangle-c^2\|p_n^*\|^2\right]$ for any $n\geq0$ and $c\in\mathbb{R}$. The term in square brackets is quadratic in $c$ and zero for $c\in\left\{0,2{\langle f_n,p_n^*\rangle}/{\|p_n^*\|^2}\right\}$. Hence, it is maximal for $c=c_n=\langle f_n,p_n^*\rangle/\|p_n\|^2$. This concludes the proof.
\end{proof}

\begin{rem}[Well-definedness]
Note that the scheme \eqref{scheme} is well-defined due to the Poincar\'{e} inequality \eqref{ineq:poincare}. It can be used to bound the extinction time $T(f)$ of the gradient flow with datum $f$. In more detail, it holds $T(f)\leq C\|f\|$, as was shown in \cite{bungert2019nonlinear}. Consequently, the flows with datum $f_n$ for $n\geq 1$  all have finite extinction time since, $T(f_n)\leq C\|f_n\|\leq C\|f\|<\infty$.
\end{rem}

We start with a Lemma that will be useful for proving convergence of \eqref{scheme} in the spectral case.
\begin{lem}\label{lem:decrease}
It holds $\sum_{n=0}^\infty\frac{\langle f_n,p_n^*\rangle^2}{\|p_n^*\|^2}<\infty$ and, in particular, $\lim_{n\to\infty}\frac{\langle f_n,p_n\rangle}{\|p_n^*\|}=0.$
\end{lem}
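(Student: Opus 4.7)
The plan is to exploit the identity \eqref{eq:norms} from the preceding Proposition as a telescoping device. Since
\begin{align*}
\|f_{n+1}\|^2 = \|f_n\|^2 - \frac{\langle f_n,p_n^*\rangle^2}{\|p_n^*\|^2},
\end{align*}
I would sum this recursion from $n=0$ to $n=N$, obtaining
\begin{align*}
\|f_{N+1}\|^2 = \|f_0\|^2 - \sum_{n=0}^N \frac{\langle f_n,p_n^*\rangle^2}{\|p_n^*\|^2}.
\end{align*}

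Because $\|f_{N+1}\|^2 \geq 0$, this gives the uniform bound $\sum_{n=0}^N \frac{\langle f_n,p_n^*\rangle^2}{\|p_n^*\|^2} \leq \|f\|^2$ for every $N$. The partial sums of a series of nonnegative terms are monotone, so monotone convergence (or simply the bounded monotone sequence theorem on $\mathbb{R}$) yields $\sum_{n=0}^\infty \frac{\langle f_n,p_n^*\rangle^2}{\|p_n^*\|^2} \leq \|f\|^2 < \infty$, which is the first assertion.

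For the second claim, convergence of the series forces its general term to vanish, i.e.\ $\frac{\langle f_n,p_n^*\rangle^2}{\|p_n^*\|^2}\to 0$ as $n\to\infty$. Taking square roots (the expression is nonnegative once squared, and $\|p_n^*\|>0$ by well-definedness of the extinction profile) gives $\frac{|\langle f_n,p_n^*\rangle|}{\|p_n^*\|}\to 0$, hence $\frac{\langle f_n,p_n^*\rangle}{\|p_n^*\|}\to 0$, which is the second assertion. I do not foresee any real obstacle here: the entire argument is a telescoping-plus-nonnegativity trick, and the only thing to be careful about is that the Poincar\'e inequality discussed in the Remark guarantees the extinction profiles $p_n^*$ exist and are nonzero, so the denominators $\|p_n^*\|$ are strictly positive and the quotients make sense at every step.
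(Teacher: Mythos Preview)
Your argument is correct and matches the paper's proof essentially line for line: both telescope \eqref{eq:norms}, use nonnegativity of $\|f_{N+1}\|^2$ to bound the partial sums by $\|f\|^2$, and pass to the limit. Your added remarks on the second assertion (general term of a convergent series vanishes, take square roots) simply spell out what the paper leaves implicit.
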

\begin{proof}
Summing \eqref{eq:norms} for $n=0,\dots,K$ and using $f_0=f$ yields
$$\|f\|^2=\|f_{K+1}\|^2+\sum_{n=0}^K\frac{\langle f_n,p_n^*\rangle^2}{\|p_n^*\|^2}\geq \sum_{n=0}^K\frac{\langle f_n,p_n^*\rangle^2}{\|p_n^*\|^2}.$$
Letting $K$ tend to infinity concludes the proof.
\end{proof}

\begin{cor}
It holds $\sum_{n=0}^\infty\|f_{n+1}-f_n\|^2=\sum_{n=0}^\infty\frac{\langle f_n,p_n^*\rangle^2}{\|p_n^*\|^2}<\infty.$
\end{cor}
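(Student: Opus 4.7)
The proof is essentially a one-line computation combining the defining recursion of the scheme with the previous lemma, so my plan is simply to make this substitution explicit and invoke Lemma~\ref{lem:decrease}.

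First I would use the definition of the scheme \eqref{scheme}, which gives $f_{n+1}-f_n = -c_n p_n^*$ with $c_n = \langle f_n, p_n^*\rangle/\|p_n^*\|^2$. Taking norms squared yields
\[
\|f_{n+1}-f_n\|^2 = c_n^2\,\|p_n^*\|^2 = \frac{\langle f_n,p_n^*\rangle^2}{\|p_n^*\|^4}\,\|p_n^*\|^2 = \frac{\langle f_n,p_n^*\rangle^2}{\|p_n^*\|^2}.
\]
(Alternatively, one could read this off directly from \eqref{eq:norms}, since $\|f_{n+1}\|^2 - \|f_n\|^2 + 2\langle f_n, f_n - f_{n+1}\rangle = \|f_{n+1}-f_n\|^2$, and $f_n - f_{n+1} = c_n p_n^*$ gives $\langle f_n, f_n-f_{n+1}\rangle = \langle f_n, p_n^*\rangle^2/\|p_n^*\|^2$.)

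Then I would sum over $n$ and apply Lemma~\ref{lem:decrease} directly, which guarantees $\sum_{n=0}^\infty \langle f_n,p_n^*\rangle^2/\|p_n^*\|^2 < \infty$. Combining the identity with the summability statement yields the corollary.

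There is no real obstacle here — the only subtlety is that the denominator $\|p_n^*\|^2$ must be nonzero, which is guaranteed by the fact that $p_n^*$, as an extinction profile, is by definition non-trivial (see \eqref{extinction_profile}); moreover, if the scheme has already terminated at some step, the subsequent terms vanish and the sum trivially remains finite. Thus the proof fits comfortably in two short lines.
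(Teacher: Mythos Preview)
Your proposal is correct and matches the paper's (implicit) reasoning: the corollary is stated without proof because it follows immediately from $f_{n+1}-f_n=-c_n p_n^*$ and Lemma~\ref{lem:decrease}, exactly as you wrote. There is nothing to add.
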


\subsection{The Spectral Case}
In the spectral case, one already has the decomposition of $f$ as integral over $p(t)$ for $t>0$, where $p(t)$ is given by \eqref{gradflow}. Still, we study scheme \eqref{scheme} and prove that it provides an alternative decomposition into a discrete sum of eigenfunctions.
\begin{thm}
In the spectral case the sequence $(f_n)$ generated by \eqref{scheme} converges weakly to~$0$.
\end{thm}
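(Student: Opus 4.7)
The plan is to prove $\|f_n\|_\ast\to 0$, which together with a density argument will yield weak convergence to $0$. By Proposition~1 the norms $\|f_n\|$ are decreasing, so $(f_n)$ is bounded; combined with Lemma~\ref{lem:decrease}, which provides $c_n\|p_n^\ast\|\to 0$, what remains is to identify $\|f_n\|_\ast$ with $c_n$ in the spectral case and to establish a uniform lower bound on $\|p_n^\ast\|$.

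For the lower bound, I use that $p_n^\ast\in\partial J(p_n^\ast)$ combined with the absolute one-homogeneity of $J$ yields the identity $J(p_n^\ast)=\langle p_n^\ast,p_n^\ast\rangle=\|p_n^\ast\|^2$, while subgradients of such a functional automatically lie in $\mathcal N(J)^\perp$. Applying~\eqref{ineq:poincare} to $p_n^\ast$ then gives $\|p_n^\ast\|\leq CJ(p_n^\ast)=C\|p_n^\ast\|^2$, hence $\|p_n^\ast\|\geq 1/C$, and therefore $c_n\to 0$.

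The central step is the identification $\|f_n\|_\ast=c_n$. The inequality $\|f_n\|_\ast\geq c_n$ is immediate by testing~\eqref{dual_norm} at the admissible element $p_n^\ast/\|p_n^\ast\|^2\in\mathcal N(J)^\perp$, for which $J=1$ by the identity above. For the converse, I invoke the spectral-case representation $f_n=\int_0^{T(f_n)} p(s)\,ds$ coming from~\eqref{reconstruction}, together with the identity $c_n=T(f_n)$ noted in the proof of Proposition~1, and the fact that here $p(s)\in\partial J(p(s))$ for all $s$. The subgradient inequality at $u=p(s)$ combined with one-homogeneity yields $\langle p(s),q\rangle\leq J(q)$ for every $q\in\mathcal H$; integrating in $s$ and restricting to $q$ with $J(q)=1$, $q\in\mathcal N(J)^\perp$, gives $\langle f_n,q\rangle\leq T(f_n)=c_n$, so $\|f_n\|_\ast\leq c_n$.

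To upgrade $\|f_n\|_\ast\to 0$ to weak convergence, I decompose any $v\in\mathcal H$ orthogonally as $v=v_0+v_1$ with $v_0\in\mathcal N(J)$, $v_1\in\mathcal N(J)^\perp$. Since $f_n\in\mathcal N(J)^\perp$, one has $\langle f_n,v_0\rangle=0$. For $v_1$, I approximate in norm by elements $v_1^k\in\mathcal N(J)^\perp\cap\mathrm{dom}(J)$, use $|\langle f_n,v_1^k\rangle|\leq J(v_1^k)\|f_n\|_\ast\to 0$ for each fixed $k$, and control the residual via the uniform bound $\|f_n\|\leq\|f\|$ in an $\varepsilon$-argument. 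I expect the main obstacle to be the integral estimate $\|f_n\|_\ast\leq c_n$, which makes essential use of the spectral-case assumption that every subgradient $p(s)$ along the flow is itself an eigenfunction; the required density of $\mathrm{dom}(J)\cap\mathcal N(J)^\perp$ in $\mathcal N(J)^\perp$ is standard in the settings considered.
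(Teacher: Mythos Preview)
Your proof is correct and follows the same strategy as the paper: establish $\|f_n\|_\ast=c_n$, bound $\|p_n^\ast\|$ uniformly from below, and conclude $\|f_n\|_\ast\to 0$ from Lemma~\ref{lem:decrease}. The only differences are that the paper cites the identity $\|f_n\|_\ast=\langle f_n,p_n^\ast\rangle/J(p_n^\ast)=c_n$ from \cite{bungert2019nonlinear} and obtains the lower bound on $\|p_n^\ast\|$ via $1=\|p_n^\ast\|_\ast$ together with~\eqref{ineq:imbedding}, whereas you supply self-contained arguments for both (the integral estimate using $c_n=T(f_n)$ and the Poincar\'e inequality applied directly to $p_n^\ast$) and also spell out the density step that the paper treats as immediate.
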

\begin{proof}
It holds according to \cite{bungert2019nonlinear} that $\|f_n\|_*={\langle f_n,p_n^*\rangle}/{J(p_n^*)}$. Using $J(p_n^*)=\|p_n^*\|^2$ we find
$$\lim_{n\to\infty}\|f_n\|_*\|p_n^*\|=\lim_{n\to\infty}\frac{\langle f_n,p_n\rangle}{\|p_n^*\|}=0$$
by Lem.~\ref{lem:decrease}. However, $\|p_n^*\|$ cannot be a null sequence since \eqref{ineq:imbedding} implies $1=\|p_n^*\|_*\leq C\|p_n^*\|^2.$ Therefore, $\|f_n\|_*$ converges to zero which immediately implies that $f_n\rightharpoonup 0$ in~$\mathcal H$.
\end{proof}

\begin{cor}[Parseval identity]
In the spectral case one has $f=\sum_{n=0}^\infty c_np_n^*$, where the equality is to be understood in the weak sense, i.e., tested against any $v\in\mathcal H$. In particular, for $v=f$ this implies the \emph{Parseval identity} 
$$\|f\|^2=\sum_{i=0}^\infty c_i\langle p_i^*,f\rangle,$$
which is a perfect analogy to the linear Fourier transform.
\end{cor}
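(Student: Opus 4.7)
The proof will be essentially a direct consequence of the preceding theorem, requiring almost no additional work. The plan is to rewrite the scheme in the form \eqref{scheme2} and pass to the weak limit.

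First I would recall that the telescoped version of the scheme reads
$$f_{n+1}=f-\sum_{i=0}^n c_i p_i^*,\quad n\geq 0.$$
Pairing this with an arbitrary test element $v\in\mathcal H$ yields
$$\left\langle \sum_{i=0}^n c_ip_i^*,v\right\rangle=\langle f,v\rangle-\langle f_{n+1},v\rangle.$$
By the previous theorem, $f_n\rightharpoonup 0$ in the spectral case, so $\langle f_{n+1},v\rangle\to 0$ as $n\to\infty$, which gives
$$\sum_{i=0}^\infty c_i\langle p_i^*,v\rangle = \langle f,v\rangle$$
for every $v\in\mathcal H$. This is exactly the weak-sense meaning of $f=\sum_{n=0}^\infty c_np_n^*$ claimed in the first part of the statement.

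For the Parseval identity I would then simply specialize the test element to $v=f$, which turns the left-hand side into $\sum_{i=0}^\infty c_i\langle p_i^*,f\rangle$ and the right-hand side into $\|f\|^2$. The convergence of the series is automatic, since it is just the convergence of the scalar partial sums $\langle S_n,f\rangle$, where $S_n:=\sum_{i=0}^n c_ip_i^*=f-f_{n+1}$, and weak convergence of $S_n$ to $f$ implies $\langle S_n,f\rangle\to\|f\|^2$.

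There is no real obstacle here: the entire content is packaged in the previous theorem, and the only potential subtlety is keeping track of whether the series converges in a stronger sense than weak. The proposition does not claim norm convergence, so the weak statement is all that is needed, and the Parseval identity is simply the weak identity tested against $f$ itself.
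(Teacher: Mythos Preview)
Your proof is correct and matches exactly the approach the paper intends: the corollary is stated without proof because it follows immediately from the preceding theorem via the telescoped form \eqref{scheme2} and weak convergence $f_n\rightharpoonup 0$, precisely as you argue. There is nothing to add.
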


\subsection{The General Case}
In the general case, one only has much weaker statements about the iterates of the scheme \eqref{scheme}. Indeed, one can only prove weak convergence of a subsequence of $(f_n)$. Furthermore, the limit is not zero, in general, meaning that there remains a rest which cannot be decomposed into eigenfunctions by the scheme. 

\begin{thm}
The sequence $(f_n)$ generated by \eqref{scheme} admits a subsequence $(f_{n_k})$ that converges weakly to some $g\in\mathcal H$.
\end{thm}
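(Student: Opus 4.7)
The plan is to show that $(f_n)$ is bounded in $\mathcal H$ and then invoke the standard weak sequential compactness of bounded sets in a Hilbert space. There is essentially no obstacle here; this result is considerably weaker than the spectral-case theorem because we give up both convergence of the full sequence and identification of the limit.

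First, I would observe that equation \eqref{eq:norms} from the proposition gives
\begin{align*}
\|f_{n+1}\|^2 = \|f_n\|^2 - \frac{\langle f_n,p_n^*\rangle^2}{\|p_n^*\|^2} \leq \|f_n\|^2
\end{align*}
for every $n\geq 0$. Iterating yields $\|f_n\|\leq \|f_0\| = \|f\|$ for all $n$, so the sequence $(f_n)$ lies in the closed ball of radius $\|f\|$ in $\mathcal H$.

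Next, I would recall that $\mathcal H$, being a Hilbert space, is reflexive, so its closed bounded balls are weakly compact by the Banach--Alaoglu theorem. By the Eberlein--\v{S}mulian theorem, weak compactness in a Banach space is equivalent to weak sequential compactness, so from the bounded sequence $(f_n)$ one can extract a subsequence $(f_{n_k})$ converging weakly to some $g\in\mathcal H$. This $g$ is the desired limit, and the statement follows.

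The result is essentially optimal at this level of generality: without spectral-case structure we have no mechanism analogous to the identity $\|f_n\|_* = \langle f_n,p_n^*\rangle/J(p_n^*)$ used in the previous theorem, and so we cannot conclude that $\|f_n\|_*\to 0$, cannot upgrade to strong convergence along the subsequence, and cannot identify $g$ with $0$.
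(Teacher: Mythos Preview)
Your proof is correct and follows exactly the paper's own approach: use \eqref{eq:norms} to conclude that $(f_n)$ is bounded in $\mathcal H$, and then invoke weak sequential compactness of bounded sets in a Hilbert space. The paper's version is simply more terse (it does not name Banach--Alaoglu or Eberlein--\v{S}mulian), but the argument is identical.
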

\begin{proof}
By \eqref{eq:norms}, the sequence $(f_n)$ is bounded in $\mathcal{H}$ and, therefore, admits a convergent subsequence. 
\end{proof}

To conclude this section, we mention that scheme \eqref{scheme} provides an alternative decomposition into eigenfunctions in the spectral case. In the general case, however, the scheme is of limited use since it fails to decompose the whole datum, in general. Furthermore, the indecomposable rest can still contain a large amount of information as Figure~\ref{fig:barb} shows. 

\begin{figure}
\centering
\includegraphics[width=0.3\textwidth]{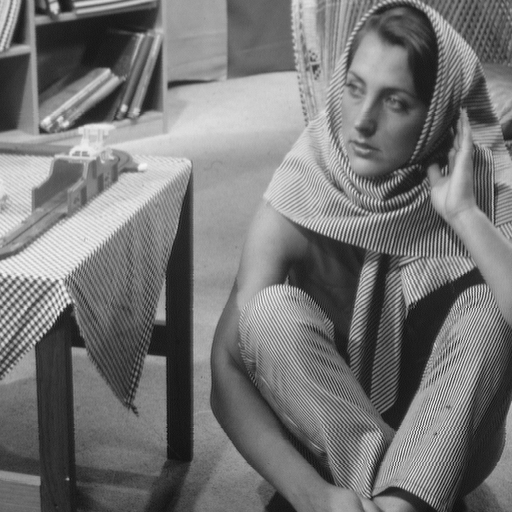}
\hfill
\includegraphics[width=0.3\textwidth]{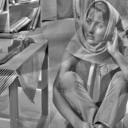}
\hfill
\includegraphics[width=0.3\textwidth]{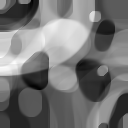}
\caption{From left to right: data $f$, indecomposable rest $g$, isolated TV-eigenfunctions\label{fig:barb}}
\end{figure}

\section{Applications}
\label{sec:applications}
In the following, we discuss different applications for the proposed spectral decomposition scheme \eqref{scheme} and extinction profiles of the gradient flow \eqref{gradflow}. After giving a short insight into the numerical computation of extinction profiles, we use the scheme and the gradient flow to compute and compare two different decompositions of an 1-dimensional signal into eigenfunctions of the total variation. Thereafter, we illustrate the use of the gradient flow and its extinction profiles for spectral graph clustering.
 
\subsection{Numerical Computation of Extinction Profiles}
\label{sec:numerics}
Here we describe how to calculate the extinction profiles $p_n^*$ of $f_n$ in scheme~\eqref{scheme}. Given a time step size $\delta>0$ the solution $u_k$ of the gradient flow \eqref{gradflow} at time $t_k=\delta k$ with datum $f$ is recursively approximated via the implicit scheme
$$u_k=\mathrm{arg}\min_{u\in\mathcal H}\frac{1}{2}\|u-u_{k-1}\|^2+\delta J(u),\quad k\in\mathbb{N},$$
where $u_0=f$. These minimization problems can be solved efficiently with a primal dual optimization algorithm \cite{chambolle2011first}. Defining $p_k:=(u_{k-1}-u_k)/\delta$ yields a sequence of pairs $(u_k,p_k)$ where $p_k\in\partial J(u_k)$ for every $k\in\mathbb{N}$. Hence, in order to compute an extinction profile of $f$, we keep track of the quantities $\hat{p}_k:=(p_k+p_{k-1})/2$ (cf.~the definition of $p^*$ in \eqref{extinction_profile}), and define the extinction profile $p^*$ of $f$ as the element $\hat{p}_k$ which has the highest Rayleigh quotient $R(\hat{p}_k):=\|\hat{p}_k\|^2/J(\hat{p}_k)$ before extinction of the flow. Note that since $\partial J(u)\subset\partial J(0)$ holds for all $u\in\mathcal H$ (cf.~\cite{bungert2019nonlinear}), both $p_k$ and, by convexity, also $\hat{p}_k$ are in particular elements of $\partial J(0)$ and, thus, have a Rayleigh quotient smaller or equal than one. Hence, an extinction profile, being even an eigenfunction, can be identified by having a quotient of one. To design a criterion for extinction of the flow we make use of the fact that the subgradients $p(t)$ in the gradient flow have monotonously decreasing norms and define the extinction time as the number $\delta k$ such that $\|p_k\|$ is below a certain threshold.

\subsection{1D Total Variation Example}
As already mentioned in Section~\ref{sec:scheme}, choosing $J$ to be the one-dimensional total variation yields a spectral case, i.e., the sequence $(f_n)$ in \eqref{scheme} weakly converges to zero which implies that $f=\sum_{i=0}^\infty c_i p_i^*$ is a decomposition into eigenfunctions. The rightmost images in Figure~\ref{fig:approx_of_data} show the data signal in red, whereas the other four images depict the approximation of $f$ by eigenfunctions of the gradient flow and the iterative scheme, respectively. The individual eigenfunctions (up to multiplicative constants) as computed by the gradient flow \eqref{gradflow} and the scheme \eqref{scheme} are given in Figure~\ref{fig:1d_efs}. Hence, the sum of the top four or the bottom 18 eigenfunctions, respectively, yield back the red signal from Figure~\ref{fig:approx_of_data}.

Note that the gradient flow only needs four eigenfunctions to generate the data and hence gives a very sparse representation. However, the individual eigenfunctions have decreasing spatial complexity. This is a fundamental difference to the system of eigenfunctions generated by \eqref{scheme} which -- being extinction profiles -- all have low complexity. This qualitative difference can also be observed in Figure~\ref{fig:approx_of_data} where the first eigenfunction of the gradient flow (top left) already contains all the structural information of the red signal whereas the approximation in the bottom row successively adds structure.

\begin{figure}[tbh]
\begin{center}
\includegraphics[width=1\textwidth,trim={1cm 8cm 0cm 8cm},clip]{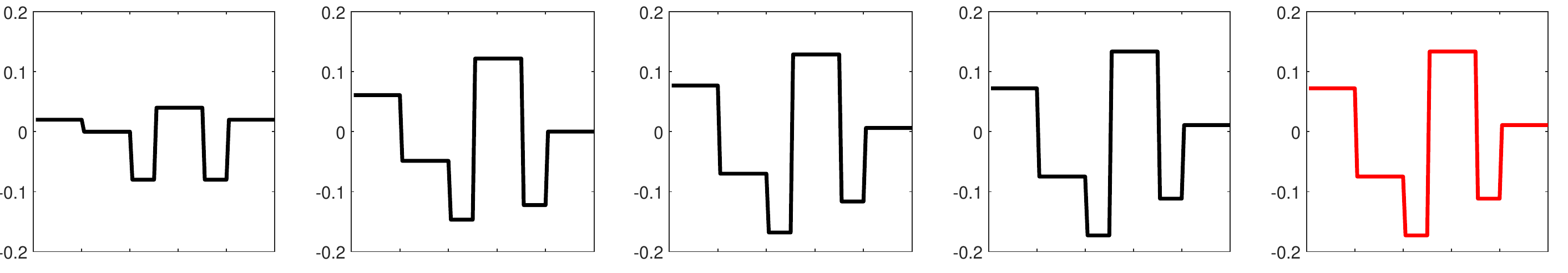}\\
\includegraphics[width=1\textwidth,trim={2cm 8cm 1cm 8cm},clip]{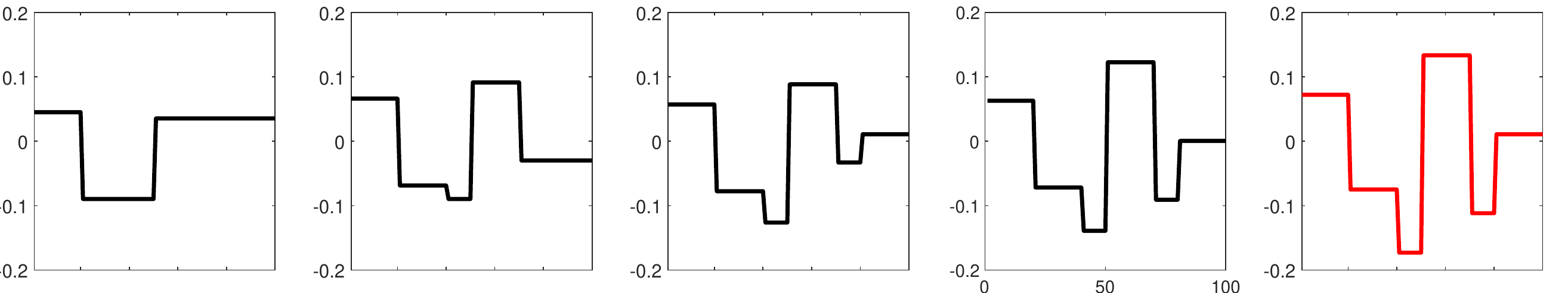}
\end{center}
\caption{Reconstruction of a 1D signal (red) based on TV eigenfunctions computed with the gradient flow scheme \eqref{gradflow} (top row) and the proposed extinction profile scheme \eqref{scheme} (bottom row). From left to right: sum of the first $1, 2, 3, 4$ (gradient flow), respectively $3, 6, 9, 18$ (recursive scheme) computed eigenfunctions, original data signal, see Figure~\ref{fig:1d_efs} for the individual eigenfunctions\label{fig:approx_of_data}}
\end{figure}
\begin{figure}[tbh]
\begin{center}
\includegraphics[width=1\textwidth,trim={1cm 6cm 0cm 7cm},clip]{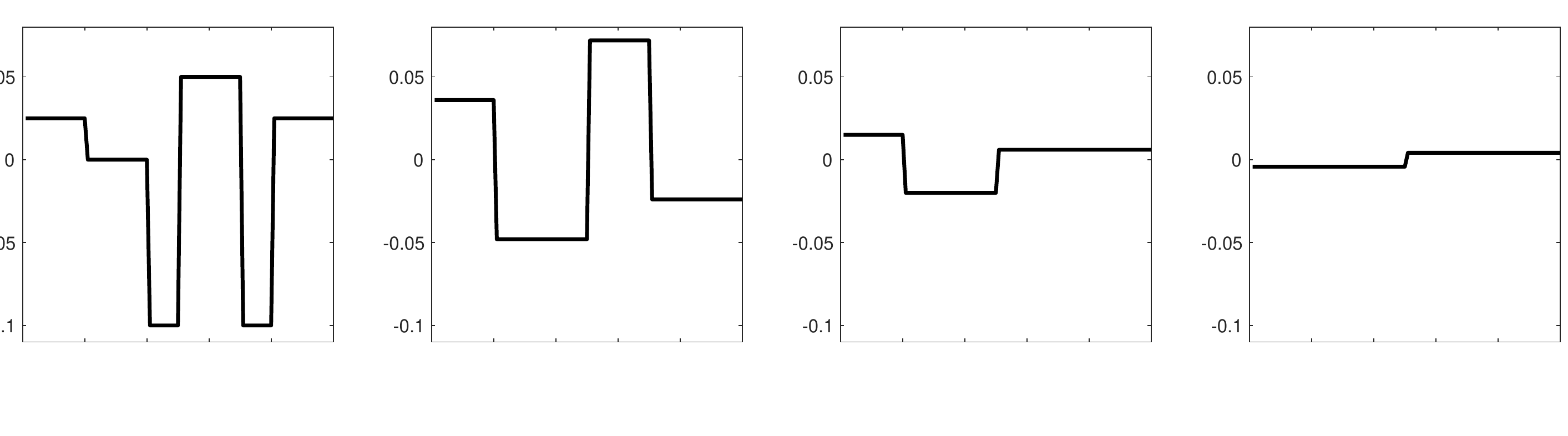}\\[-0.5cm]
\includegraphics[width=1\textwidth,trim={6cm 6cm 5.4cm 5cm},clip]{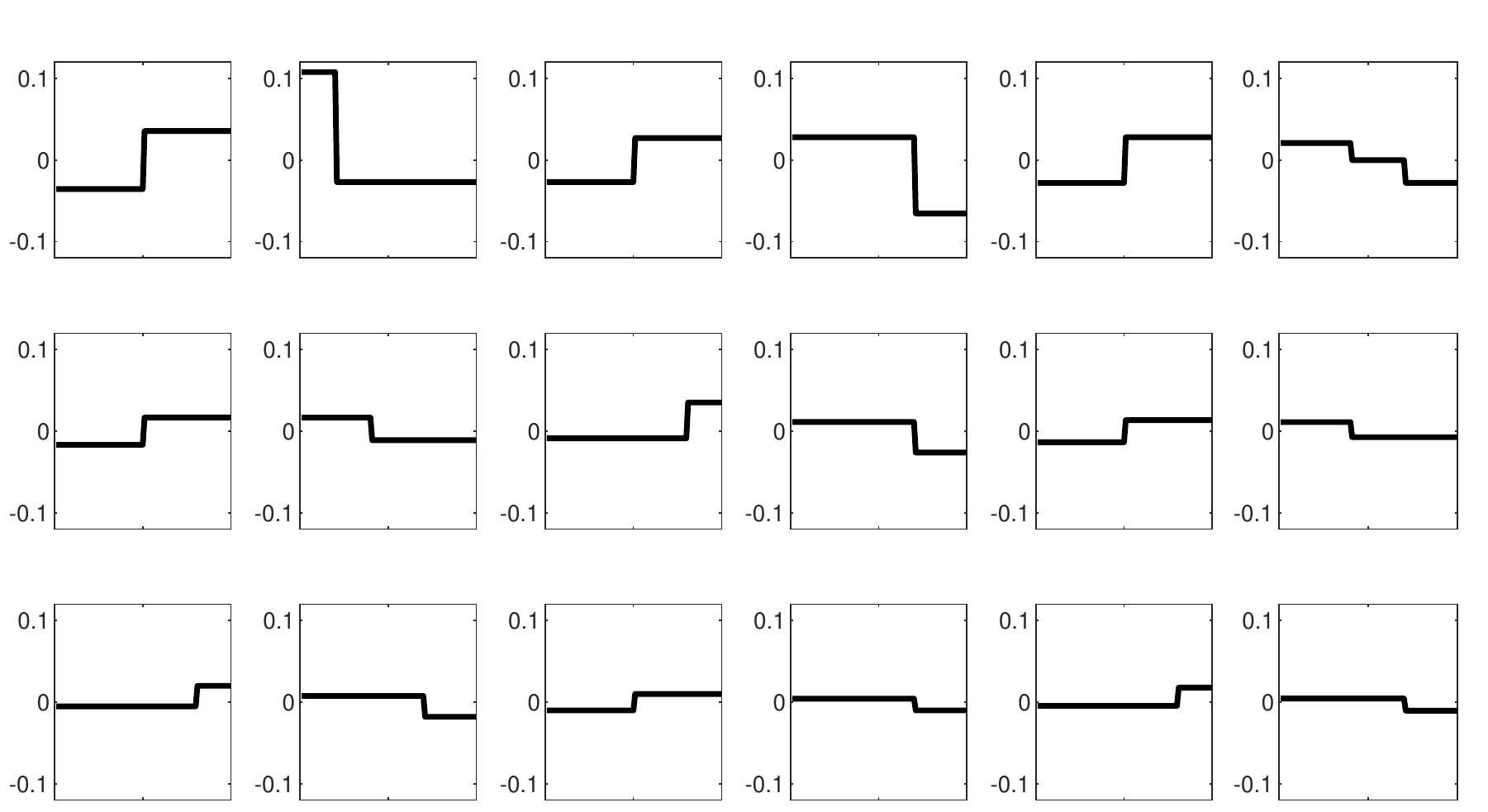}
\end{center}
\caption{Comparison between the computed 1D TV eigenfunctions of the gradient flow scheme \eqref{gradflow} and the proposed extinction profile scheme \eqref{scheme}. Top: All four eigenfunctions computed with \eqref{gradflow}. Bottom: first 18 eigenfunctions computed with \eqref{scheme}\label{fig:1d_efs}.}
\end{figure}
 
\subsection{Spectral Clustering with Extinction Profiles}
Spectral clustering arises in various real world applications, e.g., in discriminant analysis, machine learning, or computer vision. The aim in this task is to partition a given data set according to the spectral characteristics of an operator that captures the pairwise relationships between each data point. Based on the spectral decomposition of this operator one tries to find a partitioning of the data into sets of strongly related entities, called clusters, that should be clearly distinguishable with respect to a chosen feature. Within each cluster the belonging data points should be homogeneous with respect to this feature.

In order to model relationships between entities without further knowledge about the underlying data topology one may use finite weighted graphs. In this model each data point is represented by a vertex of the graph while the similarity between two data points is represented by a weighted edge connecting the respective vertices. 
For details on data analysis using finite weighted graphs we refer to \cite{Elmoataz_2015,Meng_2017}. In the literature it is well-known that there exists a strong mathematical relationship between spectral clustering and various minimum graph cut problems. For details we refer to \cite{Luxburg_2007}.

For the task of spectral clustering one is typically interested in the eigenvectors of a discrete linear operator known as the \textit{weighted graph Laplacian} $\Delta_w$, which can be represented as a matrix $L$ of the form $L=D-W$, for which $D$ is a diagonal matrix consisting of the degree of each graph vertex and $W$ is the adjacency matrix capturing the edge weights between vertices.
Determining the discrete spectral decomposition of the graph Laplacian $L$ is a common problem in mathematics and thus easy to compute. After determining the eigenvectors of $L$ one performs the actual clustering, e.g., via a standard $k$-means algorithm or simple thresholding. 
Note that in various applications a spectral clustering based on solely one eigenvector, i.e., the corresponding eigenvector of the second-smallest eigenvalue, already yields interesting results, e.g., for image segmentation \cite{Meng_2017}.
On the other hand, due to the linear nature of the graph Laplacian this approach is rather restricted in many real world applications. 
For this reason one aims to perform spectral clustering based on eigenfunctions of a nonlinear, possibly more suitable, operator. Bühler and Hein proposed in \cite{Buehler_2009} an iterative scheme to compute eigenfunctions of a nonlinear operator known as the \textit{weighted graph $p$-Laplacian}
\begin{equation} 
\label{eq:graph-p-laplacian}
\Delta_{w,p} f(x) \ = \ \sum_{y \sim x} w(x,y)^\frac{p}{2} |f(y) - f(x)|^{p-2} (f(y) - f(x)).
\end{equation}
Note that this operator is a direct generalization of the standard graph Laplacian $\Delta_w = \Delta_{w,2}$ for $p=2$.
Their idea consists in computing an eigenfunction of the linear Laplacian $\Delta_{w}$ and use this as initialization for a non-convex minimization problem of a Rayleigh quotient which leads to an eigenfunction of $\Delta_{w,p}$ with $p < 2$. This procedure is repeated iteratively for decreasing $p \rightarrow 1$ by using the intermediate solutions as initialization for the next step.
Their method already leads to satisfying results in situations in which a linear partitioning of the given data is not sufficient. However, as the authors state themselves, this approach often converges to unwanted local minima and is restricted to eigenfunctions corresponding to the second-smallest eigenvalue.

In Figure \ref{fig:tv-cluster} we compare the spectral clustering approach from \cite{Buehler_2009} based on the nonlinear graph $p$-Laplace operator with the extinction profiles introduced in Section~\ref{sec:gf_eigenfunctions}. For this we consider the following one-homogeneous, convex functional defined on vertex functions of a finite weighted graph
\begin{equation}
J(f) \ = \ \frac{1}{2} \sum_{x\in V} ||\nabla_w f(x)||_1.
\end{equation}
Here, $\nabla_w f(x)$ denotes the weighted gradient of the vertex function $f$ in a vertex~$x$. Note that this mimics a strong formulation of TV in the continuous case. The subgradient $\partial J$ corresponds to the graph $1$-Laplacian as special case of the graph $p$-Laplacian in \eqref{eq:graph-p-laplacian} for $p=1$.
We test the different approaches on the ``Two Moon'' dataset with low noise variance (top row) and a slightly increased noise variance (bottom row). The first column shows the computed nonlinear eigenfunction by the Bühler and Hein approach. In case of low noise variance (top) the eigenfunction takes only two values and is piece-wise constant, partitioning the data well. 
However, the eigenfunction takes more values in the noisy case (bottom) and a subsequent $k$-means-based clustering with $k=2$ does not yield a good partitioning of the data. In the center column we depict the extinction profiles computed with \eqref{gradflow}, initialized with random values on the graph vertices. Note that both eigenfunctions are piece-wise constant and take only two values, thus inducing a binary partitioning directly. However, similar to the Bühler-Hein eigenfunction, the found eigenfunction for the noisy case is not suitable to partition the dataset correctly. Hence, we performed a third experiment in the right column in which we initialized $5\%$ of the nodes per cluster with the values $\pm 1$, respectively and set the others to zero. Thus, we enforced the computation of eigenfunctions that correctly partition the data. This can be interpreted as a semi-supervised spectral clustering approach.
\begin{figure}[tbh]
\includegraphics[trim=50 12 35 60,clip,width=0.24\textwidth]{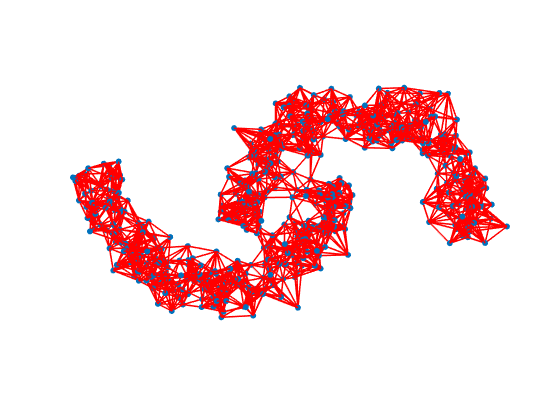}
\includegraphics[trim=50 12 35 60,clip,width=0.24\textwidth]{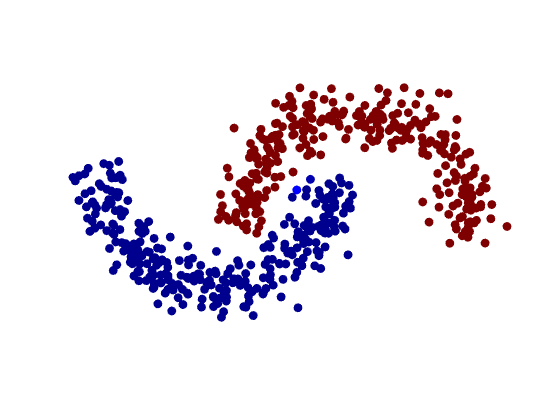}
\includegraphics[trim=50 12 35 60,clip,width=0.24\textwidth]{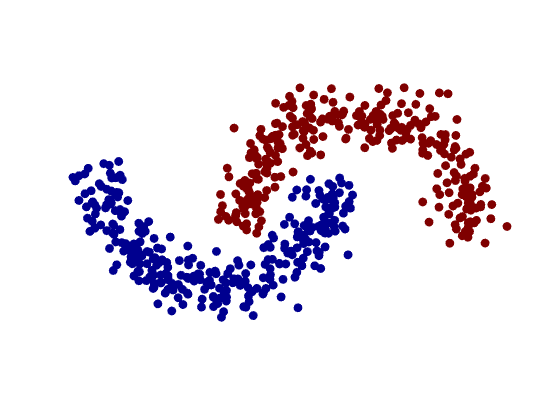}
\includegraphics[trim=50 12 35 60,clip,width=0.24\textwidth]{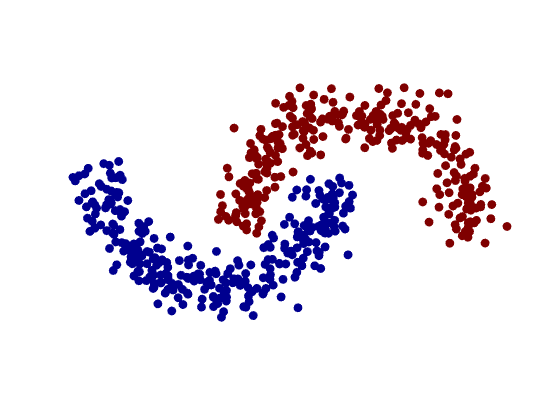}\\[-0.5cm]
\includegraphics[trim=50 12 35 60,clip,width=0.24\textwidth]{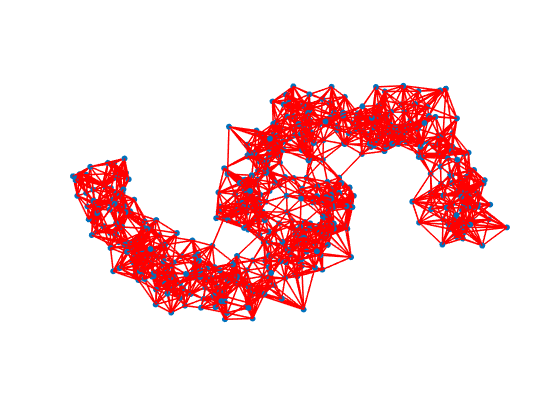}
\includegraphics[trim=50 12 35 60,clip,width=0.24\textwidth]{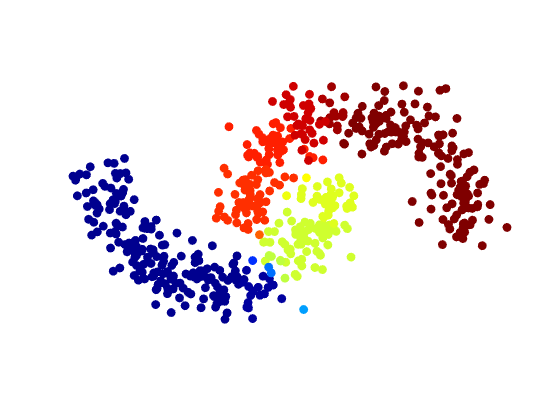}
\includegraphics[trim=50 12 35 60,clip,width=0.24\textwidth]{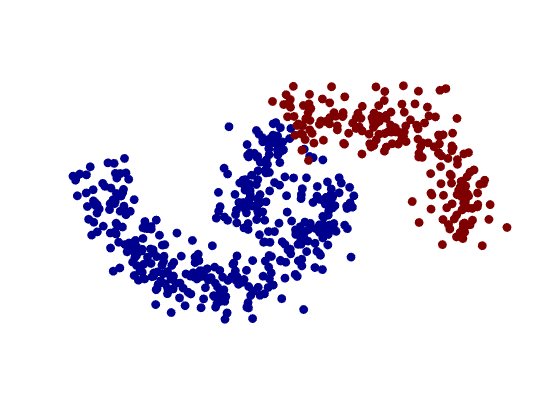}
\includegraphics[trim=50 12 35 60,clip,width=0.24\textwidth]{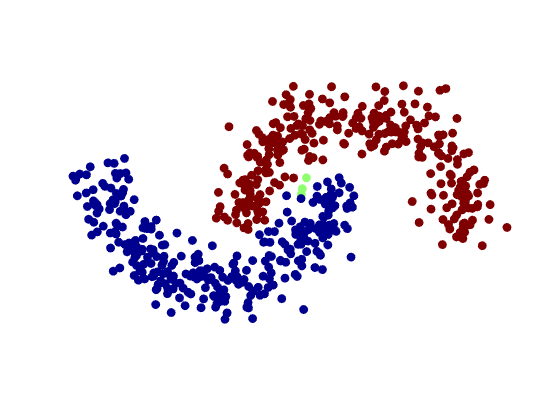}\\[-0.7cm]
\caption{Spectral clustering results on the ``Two Moon'' dataset based on the \textit{nonlinear graph $1$-Laplace operator} for two different levels of noise variance (top: $\sigma^2=0.015$, bottom: $\sigma^2=0.02$). Eigenfunctions computed from left to right: $k$-nearest neighbor graph for $k=10$, Bühler and Hein  approach \cite{Buehler_2009}, extinction profile with random initialization, extinction profile with $5\%$ manually labeled data points.\label{fig:tv-cluster}}
\end{figure}

In conclusion we state that spectral clustering based on nonlinear eigenfunctions is a potentially powerful tool for applications in data analysis and machine learning. However, we note that neither the Bühler and Hein approach discussed above nor extinction profiles guarantee a correct partioning of the data, in general. We could mitigate this drawback by using the fact that the chosen initialization of the gradient flow influences its extinction profile. 

\subsection{Outlook: Advanced Clustering with Higher-Order Eigenfunctions}
Finally, we demonstrate some preliminary results of our numerical experiments on a more challenging data set known as ``Three Moons''. In this case one requires for spectral clustering an eigenfunction that is constant on each of the three half-moons. Thus, we aim to find eigenfunctions of the graph $1$-Laplacian that correspond to a \emph{higher} eigenvalue than the second-smallest one. For this reason it is apparent that Bühler and Hein's method in its simplest variant (without subsequent splitting) always fails in this scenario (see top-right image in Figure~\ref{fig:three_moon}). Also extinction profiles, having the lowest possible eigenvalue of all subgradients $p(t)$ of the gradient flow \eqref{gradflow} lead to unreasonable results. However, one can still make use of the other subgradients and select those that are close to an eigenfunction, which can be measured by the Rayleigh quotient. The second row in Figure~\ref{fig:three_moon} shows three subgradients with a Rayleigh quotient of more than $90\%$, as they occur in the gradient flow. Note that the last one coincides with the extinction profile and obviously fails in separating all three moons since it only computes a binary clustering. Similarly, the first subgradient finds four clusters. The correct clustering into three moons is achieved by the subgradient in the center which was the last eigenfunction to appear before the extinction profile. This underlines the need of higher-order eigenfunctions for accurate multi-class spectral graph clustering.
\begin{figure}[tbh]
\centering
\includegraphics[trim=50 12 35 60,clip,width=0.32\textwidth]{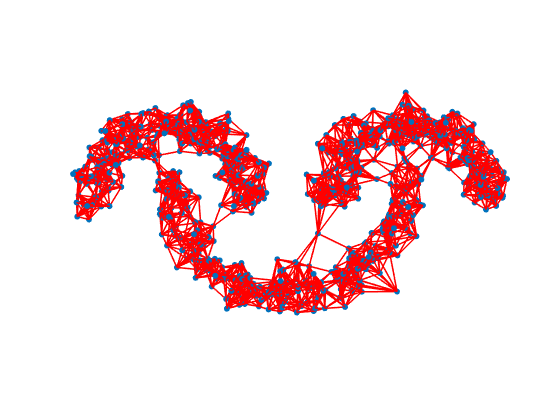}
\includegraphics[trim=50 12 35 60,clip,width=0.32\textwidth]{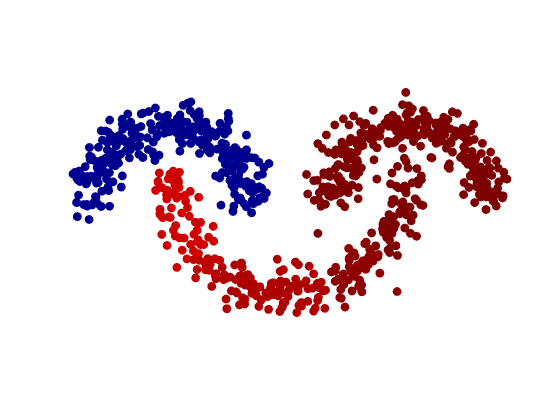}\\[-0.7cm]
\includegraphics[trim=50 12 35 60,clip,width=0.32\textwidth]{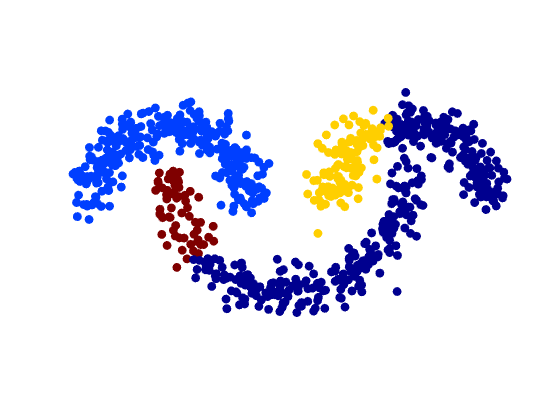}
\includegraphics[trim=50 12 35 60,clip,width=0.32\textwidth]{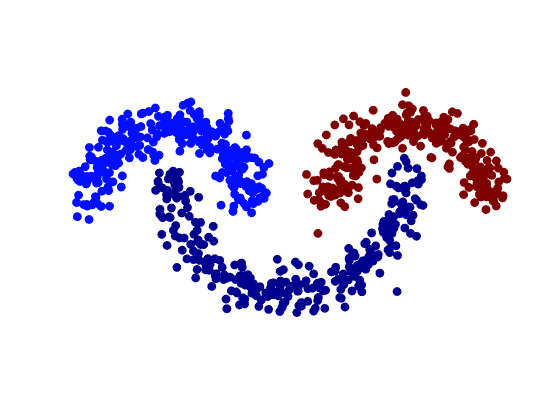}
\includegraphics[trim=50 12 35 60,clip,width=0.32\textwidth]{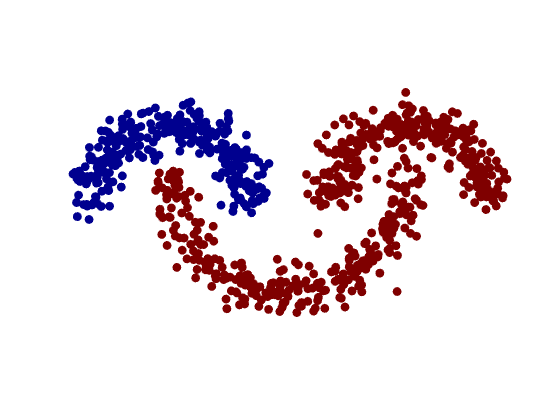}\\[-0.7cm]
\caption{Comparison between the computed eigenfunction of the Bühler and Hein approach (top) and three subgradients of the gradient flow with decreasing norm and Rayleigh quotient $\geq 90\%$ (bottom) on the ``Three moons'' dataset.\label{fig:three_moon}}
\end{figure}

\section*{Acknowledgments}
This work was supported by the European Union’s Horizon 2020 research and innovation programme under the Marie Skłodowska-Curie grant agreement No 777826 (NoMADS). LB and MB acknowledge further support by ERC via Grant EU FP7 – ERC Consolidator Grant 615216 LifeInverse.

\clearpage
\bibliography{bib}{}
\bibliographystyle{plain}
\end{document}